\documentclass[12pt]{amsart}
\usepackage{graphicx, amsfonts, amsmath, amssymb, cleveref, esvect, xcolor, nicematrix} % Required for inserting images

\topmargin       0 cm 
\evensidemargin  1cm
\oddsidemargin   1cm
\textheight       21.75cm
\textwidth       14.75cm 

\newtheorem{theorem}{Theorem}[section]

\newtheorem*{maintheorem}{Main Theorem}

\newtheorem{lemma}[theorem]{Lemma}

\newtheorem{claim}[theorem]{Claim}

\theoremstyle{definition}

\numberwithin{equation}{section}

\title[A lower bound on end-periodic stretch factors]{A lower bound on end-periodic stretch factors}

\author[M. Loving and C. Wu]{Marissa Loving and Chenxi Wu}

\date{\today}

\begin{document}

\begin{abstract}
    Given an end-periodic homeomorphism $f: S \to S$ we give a lower bound on the Handel--Miller stretch factor of $f$ in terms of the \emph{core characteristic of $f$}, which is a measure of topological complexity for an end-periodic homeomorphism. We also show that the growth rate of this bound is sharp.
\end{abstract}

\maketitle

\section{Introduction}

In this note we consider the Handel--Miller stretch factor $\lambda_{HM}$ associated to an end-periodic homeomorphism $f$. Our main result is a lower bound on $\lambda_{HM}$ analogous to Penner's lower bound on the least dilatation of pseudo-Anosov homeomorphisms \cite{penner1991bounds}. Here $\chi(f)$ is the \emph{core characteristic of $f$} which we define in \Cref{sub:endperiodic}.

\begin{maintheorem} \label{thm:main}
    Let $f: S \to S$ be an end-periodic homeomorphism. If $\lambda_{HM}\not=1,$ then $\log\lambda_{HM}\geq {\frac{\log(2)}{3\chi(f)}}$.
\end{maintheorem}

Penner's proof utilizes train track theory, to bound the complexity of the Markov decomposition associated to a pseudo-Anosov homeomorphism of a closed surface, together with a spectral theory trick \cite[Lemma]{penner1991bounds}, to bound the leading eigenvalue, which we will also employ here. However, we are able to obtain bounds on the complexity of the Markov decomposition for an end-periodic homeomorphism much more directly by utilizing the fact that $\Lambda^+ \cap \Lambda^-$ is contained in some compact subsurface $C \subset S$ such that $\Lambda^+ \cap C$ (resp. $\Lambda^- \cap C$) is a collection of arcs. 

It was pointed out to us by Sam Taylor that the Handel--Miller stretch factor $\lambda_{HM}$ that we consider here coincides with the stretch factor of spun pseudo-Anosov maps described in \cite[Theorem~C]{LandryMinskyTaylor2023}. We include a brief discussion of this after the proof of the Main Theorem in \Cref{section:proof}. 

\section{Preliminaries}

\subsection{End-periodic homeomorphisms} \label{sub:endperiodic} Throughout we will let $S$ denote a connected, orientable surface of infinite type with finitely many ends all of which are non-planar. An \emph{end-periodic homeomorphism} $f: S \to S$ is a homeomorphism satisfying the following. There exists $m>0$ such that for each end $E$ of $S$, there is a neighborhood $U_E$ of $E$ so that either 
    \begin{enumerate}
        \item[(i)] $f^m(U_E) \subsetneq U_E$ and the sets ${f^{nm}(U_E)}_{n > 0}$ form a neighborhood basis of $E$; or
        \item[(ii)] $f^{-m}(U_E) \subsetneq U_E$ and the sets ${f^{-nm}(U_E)}_{n > 0}$ form a neighborhood basis of $E$.
    \end{enumerate}
Following \cite{EndPeriodic1}, we call such a neighborhood $U_E$ a \emph{nesting neighborhood}. 

End-periodic homeomorphisms were introduced and studied in unpublished work of Handel--Miller. In particular, they developed a robust lamination theory for (irreducible) end-periodic homeomorphisms analogous to the lamination theory for pseudo-Anosov homeomorphisms in the finite-type setting. For a detailed treatment of this lamination theory see \cite{CC-book}. For our purposes the following discussion suffices. 

Fix an end-periodic homeomorphism $f: S \to S$. Let $U_+$ be a union of nesting neighborhoods of the attracting ends and let $U_-$ be a collection of nesting neighborhoods of the repelling ends. Following the language used in \cite{CC-book} and \cite{Whitfield} we call the set $U_+$ (resp. $U_-$) a \emph{positive} (resp. \emph{negative}) \emph{ladder} of $f$. In addition, we say the ladder $U_{\pm}$ is \emph{tight} if $f^{\pm 1}(U_{\pm}) \subset U_{\pm}$ is a proper inclusion. A disjoint pair of positive and negative tight ladders $U_+, U_-$ define a compact subsurface $Y = S - (U_+ \cup U_-)$, which is called a \emph{core} for $f$.

Following \cite{EndPeriodic2}, we define the \emph{core characteristic} of $f$ to be \[\chi(f) = \max_{Y \subset S} \chi(Y),\] where the maximum is taken over all cores $Y \subset S$ for $f$. Any core $Y$ with $\chi(Y) = \chi(f)$ is a \emph{minimal core}.

Define \[\mathcal U_+ = \bigcup_{n \geq 0} f^{-n}(U_+)\text{ and } \mathcal U_- = \bigcup_{n \geq 0} f^n(U_-).\] We call $\mathcal U_+$ the \emph{positive escaping set} and $\mathcal U_-$ the \emph{negative escaping set}. 

An essential multiloop in $\mathcal U_+$ (resp. $\mathcal U_-$) is a \emph{positive} (resp. \emph{negative}) \emph{juncture} of $f$ if it is the boundary of a tight positive (resp. negative) ladder. Given a positive and negative juncture $j_+$ and $j_-$ of $f$ we define \[J^+ = \bigcup_{k \in \mathbb Z} f^k(j_+) \text{ and } J^- = \bigcup_{k \in \mathbb Z} f^k(j_-).\]

Fix a hyperbolic metric $X$ on $S$ and let $\mathcal J^{\pm}$ be the union of tightened geodesics for each curve in $J^{\pm}$. Note that $\overline{\mathcal J^{\pm}}$ is a geodesic lamination on $S$ since it is the union of disjoint simple geodesics. The \emph{Handel--Miller laminations} associated to $f$ are the geodesic laminations defined by \[\Lambda^+ = \overline{\mathcal J^-} - \mathcal J^- \text{ and } \Lambda^- = \overline{\mathcal J^+} - \mathcal J^+.\] Note that, by definition, $\mathcal J^-$ is disjoint from $\Lambda^+$ and $\mathcal J^+$ is disjoint from $\Lambda^-$.

The intersection of the Handel--Miller laminations $\Lambda^{+} \cap \Lambda^-$ gives a Markov decomposition of the complement of the escaping points, and we denote by $\lambda_{HM}$ the corresponding spectral radius of the incidence matrix, which is the exponential of the topological entropy on the action of $f$ on $\Lambda^+\cap\Lambda^-$. We call $\lambda_{HM}$ the \emph{Handel--Miller stretch factor}. Note that the number of rectangles contained in this Markov decomposition depends on the structure of $\Lambda^{\pm}$. 

\subsection{A key lemma} 

An important tool in our proof of the Main Theorem is the Lemma from \cite{penner1991bounds}. Recall that a matrix $M$ is Perron--Frobenius (abbreviated PF) if $M$ has nonnegative entries and some power of $M$ is strictly positive. Here we weaken the hypothesis that the integral matrix $A$ is PF to only require that $A$ is nonnegative with leading eigenvalue $> 1$. We do this since the incidence matrix of the Markov decomposition induced by $\Lambda^+ \cap \Lambda^-$ is not necessarily PF, i.e. if the invariant measures on $\Lambda^{\pm}$ do not have full support. We state and sketch the proof of this generalized Penner lemma below.  

\begin{lemma}[Generalized Penner Lemma] \label{lemma:penner}
    Suppose that $A$ is an $n \times n$ nonnegative integral matrix, where $n> 1$, with leading eigenvalue $> 1$. If $\lambda$ denotes the spectral radius of $A$, then \[\log \lambda \geq \frac{\log 2}{n}.\]
\end{lemma}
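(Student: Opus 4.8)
The plan is to follow the eigenvector-chasing argument underlying Penner's original lemma, prefaced by one reduction step that absorbs the weakening of the hypothesis from ``PF'' to ``nonnegative with leading eigenvalue $>1$.'' The only input from Perron--Frobenius theory I would use is that a nonnegative matrix has its spectral radius $\lambda$ as an eigenvalue, with an associated nonnegative (not necessarily positive) eigenvector $w\ge 0$, $w\ne 0$. Since $A$ is not assumed PF, $w$ may vanish on some coordinates, so I would restrict to its support $v=\{i:w_i>0\}$: writing $A'=A[v,v]$ for the corresponding principal submatrix and $m=|v|\le n$, restricting the equation $Aw=\lambda w$ to the rows indexed by $v$ shows that $A'$ is an $m\times m$ nonnegative integral matrix with a \emph{strictly positive} eigenvector $w|_v$ for the eigenvalue $\lambda$. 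As $\lambda>1$ and $m\le n$, it then suffices to prove $\lambda^m\ge 2$, since $\lambda^n\ge\lambda^m\ge 2$ yields $\log\lambda\ge(\log 2)/n$.

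The core of the proof is then a contradiction. Suppose $\lambda^m<2$, so that $\lambda^k<2$ for every $k\le m$. Normalize $w|_v$ so its smallest entry equals $1$, and pick an index $i_0$ attaining this minimum. I would construct, by induction on $t$, indices $i_0,i_1,\dots,i_m$ in $v$ with $w_{i_t}=\lambda^t$: given $i_t$ with $w_{i_t}=\lambda^t$ and $t\le m-1$, row $i_t$ of the eigenvector equation reads $\lambda^{t+1}=\sum_j A'_{i_tj}w_j\ge\sum_j A'_{i_tj}$ because every $w_j\ge 1$; thus the row sum $\sum_j A'_{i_tj}$ is a nonnegative integer that is $<2$ and nonzero (the left-hand side is positive), hence equal to $1$, so row $i_t$ has a single nonzero entry, equal to $1$, in some column $i_{t+1}$; substituting back gives $w_{i_{t+1}}=\lambda^{t+1}$. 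Since $\lambda>1$, the values $\lambda^0<\lambda^1<\dots<\lambda^m$ are pairwise distinct, so $i_0,\dots,i_m$ are $m+1$ distinct elements of the $m$-element set $v$ --- a contradiction. Hence $\lambda^m\ge 2$, which completes the argument.

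I expect the only genuine subtlety to be the reduction to a strictly positive eigenvector: it is precisely the passage to the support of a nonnegative Perron eigenvector that replaces the PF hypothesis, and one must check that $w|_v$ remains an eigenvector of $A'$ with the same eigenvalue (which it does, since $w$ is supported on $v$, so the rows of $Aw=\lambda w$ indexed by $v$ only involve the block $A'$). After that, the inductive construction is entirely elementary and uses no further spectral information; in particular, unlike in the PF case, we never need a power of $A$ to be strictly positive.
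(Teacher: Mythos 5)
Your proof is correct, and it takes a genuinely different route from the one in the paper. The paper argues graph-theoretically: it passes to a strongly connected subgraph $G'$ of the directed graph of $A$ that realizes the spectral radius (using the block-triangular form whose diagonal blocks correspond to strongly connected components), notes that $G'$ cannot be a single cycle because $\lambda>1$, and then counts at least two directed paths of length $n'=|V(G')|$ leaving each vertex, so that $\lambda^{n'}\geq 2$ as in Penner's original cone argument. Your reduction to the support of a nonnegative Perron eigenvector plays the role of the paper's passage to a strongly connected component, and your pigeonhole step --- assuming $\lambda^m<2$ forces each relevant row of $A'$ to have row sum exactly $1$, producing $m+1$ indices carrying the pairwise distinct values $1,\lambda,\dots,\lambda^m$ inside an $m$-element index set --- replaces the path count. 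What your version buys is a fully self-contained linear-algebra argument: the only external input is that a nonnegative matrix admits a nonnegative eigenvector for its spectral radius, and you avoid having to verify that a strongly connected non-cycle really does admit two length-$n'$ paths from every vertex. What the paper's version buys is that the combinatorial mechanism (branching in the transition graph) is more visible and connects directly to Penner's original write-up. Both proofs ultimately exploit the same two facts: integrality forces a row sum to jump from $1$ to at least $2$, and $\lambda>1$ rules out the degenerate cycle/permutation case.
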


\begin{proof} 
    Given an $n \times n$ matrix $A$ with nonnegative entries, we define the \emph{oriented graph} $G = G(A)$ to be the graph on $n$ vertices with an oriented edge running from the $i^{th}$ vertex to the $j^{th}$ vertex if $A_{ij} \neq 0$. We call $A$ the \emph{transition matrix} of $G$. It is easy to see that the $(i, j)$-th entry of $A^m$, denoted as $A_{ij}^m$, is exactly the number of oriented edge-paths in $G$ of length $m$ from the $i^{th}$ to the $j^{th}$ vertex.

    Note that there exists some strongly connected subgraph $G'$ of $G$ whose transition matrix has the same leading eigenvalue as $A$. This is because the set of eigenvalues of a block triangular matrix is equal to the union of the eigenvalues of its diagonal blocks. So, in particular, the leading eigenvalue must correspond to an eigenvalue of some diagonal block. Because the leading eigenvalue is $>1$, this subgraph $G'$ is not itself a circle. Let $n' = |V(G')|$ and denote by $B$ the transition matrix associated to $G'$. 

    Suppose that the vector $\vv{x} \in \mathbb R^{n'}$ has coordinates $(x_i)_1^{n'},$ and consider the function \[|\vv{x}| = \sum_{i=1}^{n'} x_i\] defined on the ``cone" \[C = \{\vv{x} \in \mathbb R^{n'} :  x_i \geq 0 \text{ for each $i$ and } \vv{x} \neq \vv{0}\}.\]

    As in Penner's proof, in order to show that $\log \lambda(B) \geq \frac{\log 2}{n},$ it suffices to show that for any unit coordinate vector $\vv{e} \in C$, we have \[|B^{n'}\vv{e}| \geq 2.\]

    Since we know that $G'$ is not a circle, for each vertex $v \in V(G')$ there are at lesst two paths of length $n'$ starting from $v$. Thus, $|B^{n'} \vv{e}| \geq 2$ and because $n \geq n'$, we are done.
\end{proof}

\section{The proof} \label{section:proof}

\begin{proof}[Proof of Main Theorem]

Fix a minimal core $Y$ of $f$. The intersection of the leaves of $\Lambda^+$ with $Y$ consists only of arcs (see \cite[Lemma~4.52]{CC-book}). We can modify $Y$ so that all these intersections are essential arcs on $Y$, by which we mean that they are not isotopic to an arc on the boundary of $Y$. Up to isotopy, there are only finitely many possible non-intersecting essential arcs, and the maximal number of such arcs, $N_Y$, is bounded by $-3\chi(Y)$, which can be seen by doubling $Y$ across its boundary and counting non-isotopic loops. For each of these arcs $\alpha_i$, $f^{-1}(\alpha_i) \cap Y$ is a union of arcs in $\Lambda^+$. Let $m_{ij}$ be the number of such arcs isotopic to $\alpha_j$. Then the isotopy types of arcs in $\Lambda^+$ gives a Markov decomposition of $\Lambda^+\cap \Lambda^-$, and the corresponding incidence matrix is $M=[m_{ij}]$ which is a nonnegative integer matrix. Thus, $\log\lambda_{HM}$ is the $\log$ of its maximal eigenvalue, which, when non-zero, is bounded from below by $\log 2/N_Y$ by \Cref{lemma:penner}. \qedhere

\end{proof}

Note that the Handel--Miller stretch factor $\lambda_{HM}$ we consider here coincides with the stretch factor $\lambda_{spA}$ of a spun pseudo-Anosov map introduced by Landry--Minsky--Taylor in \cite{LandryMinskyTaylor2023}. This was brought to our attention by Sam Taylor along with the following discussion. Consider the intersection of the Handel--Miller laminations $\Lambda^+ \cap \Lambda^-$. In \cite{LandryMinskyTaylor2023}, they define the core dynamical system $C_f$ associated to a spun pseudo-Anosov map $f$, whose topological entropy is $\log(\lambda_{spA})$. In addition, they show that there is a semiconjugacy (with finite fibers) between $\Lambda_{spA}^+ \cap \Lambda_{spA}^-$ and $C_f$ \cite[Section~5.2]{LandryMinskyTaylor2023}, where $\Lambda_{spA}^{\pm}$ are invariant sublaminations of the invariant singular foliations associated to a spun pseudo-Anosov map. By \cite[Theorem~8.4]{LandryMinskyTaylor2023}, $\Lambda_{spA}^{\pm}$ have the same ideal endpoints as the Handel--Miller laminations $\Lambda^{\pm}$. Since the semiconjugacy mentioned above is proved using the ideal endpoints of $\Lambda_{spA}^{\pm}$, there is also a semiconjugacy (with finite fibers) between $\Lambda^+ \cap \Lambda^-$ and $C_f$. Thus, $\lambda_{HM} = \lambda_{spA}$.

\section{A sequence of examples}

Finally, we give a sequence of examples of end-periodic homeomorphisms which extend Penner's construction \cite{penner-construction} to the infinite-type setting. We construct these examples using a quotient of the Loch Ness monster surface (that is, the surface with exactly one end accumulated by genus). Recall that Penner's construction produces pseudo-Anosov homeomorphisms that can be obtained as the composition of a rotation of the surface with the product of (positive and negative) Dehn twists around simple closed curves filling a proper subsurface. This construction is often described as a phone dial, see \Cref{fig:PhoneDial}. 

\begin{figure}[htb!]
    \centering
    \includegraphics[width=0.3\linewidth]{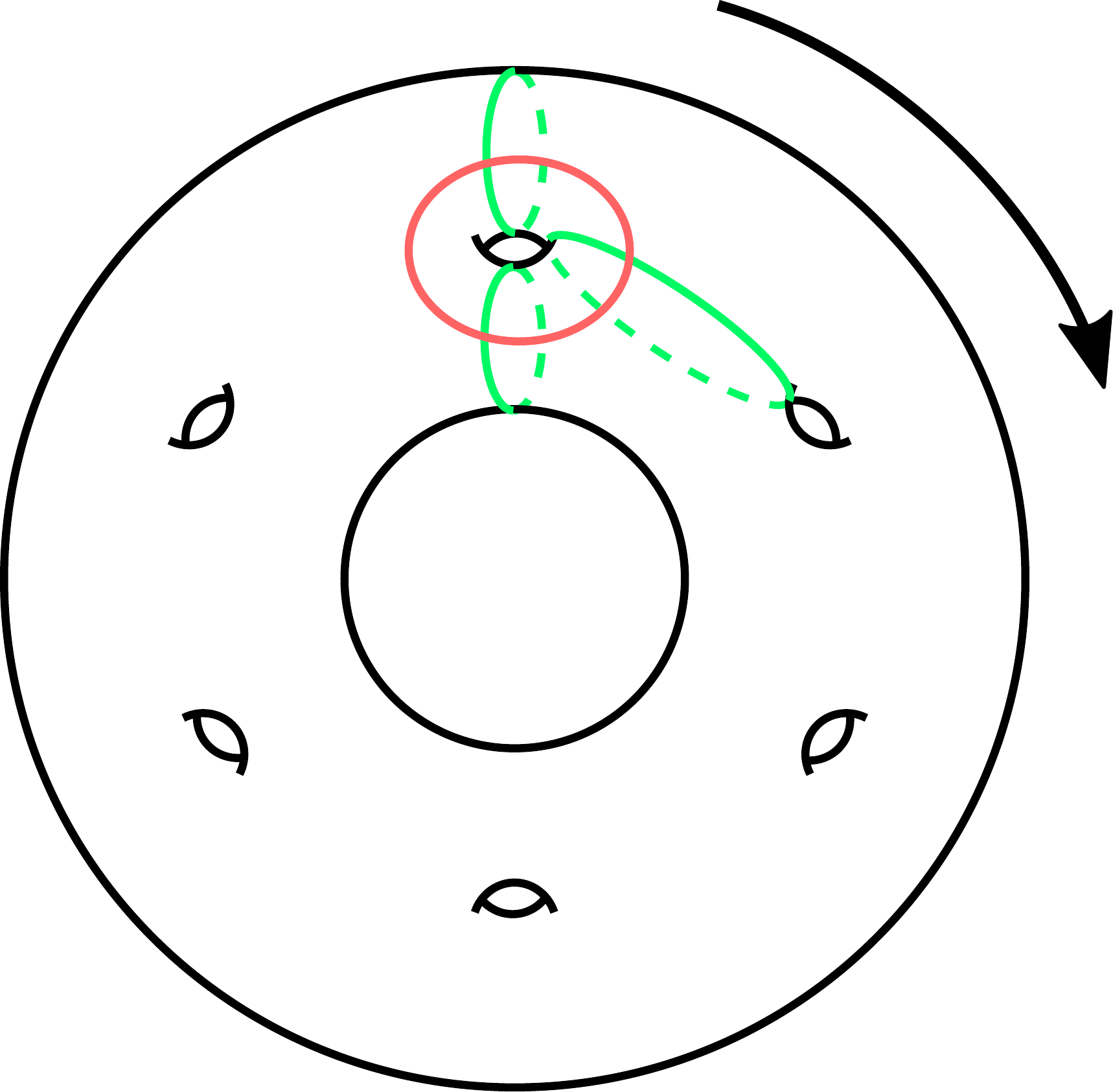}
    \caption{Note that after finitely many rotations the green and pink curves define a filling pair of multicurves.}
    \label{fig:PhoneDial}
\end{figure}

Let $S$ be the Loch Ness monster surface and consider the representation of $S$ shown in \Cref{fig:WaffleSurface}. We replace the rigid rotation shown in \Cref{fig:PhoneDial} with a composition of shifts in the ``horizontal" and ``vertical" directions. Let $p_d$ be the deck transformation given by composing a vertical shift by $d$ steps with a horizontal shift by $1$ step and let $S_d$ be the quotient of $S$ by the subgroup generated by $p_d$. Define $f_d: S_d \to S_d$ to be the map given by the Dehn twist about the pink curve on $S_d$ shown in \Cref{fig:EndPeriodicExample} composed with a shift by $1$ step in the vertical direction. Its $d$-th iterate, $f_d^d$, is now the map described in \Cref{fig:EndPeriodicExample}. Note that $f_d^d: S_d \to S_d$ coincides with the lift of the map $f_1: S_1 \to S_1$ to its $d$-fold cover. Thus, $f_d^d$ and $f_1$ have the same stretch factor and so, 
\[\log\lambda_{HM}(f_d)=\frac{\log\lambda_{HM}(f_1)}{d}.\]
We chose $f_d$ so that $f_1:S_1 \to S_1$ nearly coincides with the map described in \cite[Example~4.13]{CC-book} (simply double along all boundary components while twisting in one component and shifting in both). In order to conclude that the growth rate of the estimate in the Main Theorem is sharp, it only remains to show that $\lambda_{HM}(f_1) > 1$. We will do this in the following claim.

\begin{claim}
    Let $f_1:S_1 \to S_1$ be the map and $\tau$ the invariant train track illustrated in \Cref{fig:Iteration}. Then $\lambda_{HM}(f_1) = 2$.
\end{claim}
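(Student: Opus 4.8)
The plan is to make the Markov decomposition from the proof of the Main Theorem completely explicit for $f_1$. First I would take the minimal core $Y$ of $f_1$ determined by the pink curve and the (shifted) juncture curves in \Cref{fig:EndPeriodicExample}, and identify the branches of the train track $\tau$ of \Cref{fig:Iteration} with a maximal family of pairwise non-isotopic essential arcs $\alpha_1,\dots,\alpha_N$ carrying $\Lambda^+\cap Y$. Then I would verify that $\tau$ is genuinely $f_1$-invariant: after applying $f_1^{-1}$ and tightening, each arc $f_1^{-1}(\alpha_i)\cap Y$ is carried by $\tau$, hence is a disjoint union of arcs each isotopic to one of the $\alpha_j$. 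Recording the multiplicities $m_{ij}$ exactly as in the proof of the Main Theorem produces the nonnegative integral incidence matrix $M=[m_{ij}]$ of the Markov decomposition of $\Lambda^+\cap\Lambda^-$, and by construction $\lambda_{HM}(f_1)$ is its spectral radius.

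Second, I would compute $M$ directly from the geometry of $f_1$. Since $f_1$ is the composition of a single Dehn twist about the pink curve $c$ with a vertical shift by one step, its action on arcs is transparent: the shift carries each $\alpha_i$ to an arc in a neighboring ``row'', and the twist about $c$ replaces an arc crossing $c$ by that arc together with a controlled number of parallel copies of subarcs of $c$. Tracking the finitely many branches of $\tau$ through this picture produces $M$ explicitly; the figure suggests it is a small matrix whose characteristic polynomial can be read off by hand, with leading eigenvalue $2$. Because $\lambda_{HM}(f_1)$ is by definition the spectral radius of this incidence matrix, this yields $\lambda_{HM}(f_1)=2$, and in particular $\lambda_{HM}(f_1)>1$, which is exactly what is needed to conclude that the growth rate in the Main Theorem is sharp.

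The main obstacle I anticipate is the first step: confirming that $\tau$ really is invariant and really carries the Handel--Miller laminations. Since $f_1$ does not preserve the core $Y$, one must argue, using \cite[Lemma~4.52]{CC-book} together with the structure of the escaping sets $\mathcal U_{\pm}$, that the portions of $f_1^{-1}(\alpha_i)$ leaving $Y$ run into $\mathcal U_{\pm}$ in the expected way and contribute no additional essential arcs, so that the combinatorics is captured entirely by $M$. A related subtlety is checking that $M$ is the \emph{reduced} incidence matrix attached to $\Lambda^+\cap\Lambda^-$ rather than an inflated one: one should verify that every branch of $\tau$ is actually met by $\Lambda^+\cap\Lambda^-$ (equivalently, that the relevant invariant transverse measure is positive on each branch), so that no vertex of $G(M)$ is spurious and the spectral radius of $M$ is exactly $\lambda_{HM}(f_1)$. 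Once invariance and carrying are established, the eigenvalue computation is routine.
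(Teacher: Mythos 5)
Your overall strategy---compute the transition matrix of $f_1$ acting on the invariant train track $\tau$ and identify $\lambda_{HM}(f_1)$ with its spectral radius---is the same as the paper's, but your proposal stops short of the one step that constitutes the entire content of the Claim: actually producing the matrix and computing its leading eigenvalue. Writing that ``the figure suggests it is a small matrix whose characteristic polynomial can be read off by hand, with leading eigenvalue $2$'' assumes exactly what is to be shown. The paper carries out the computation explicitly: using the branches $x_1, x_2, x_3, \dots$ of $\tau$ labeled in \Cref{fig:Iteration} as a basis, the action of $f_1$ is recorded by an infinite matrix whose first two rows are $(0,1,0,0,\dots)$ and $(0,2,0,0,\dots)$ and whose $i$-th row, for $i \geq 3$, has a single $1$ in column $i-2$. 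The span of $x_1, x_2$ is invariant, with the dynamics there governed by the block $\left(\begin{smallmatrix} 0 & 1 \\ 0 & 2 \end{smallmatrix}\right)$, and every other branch simply shifts toward that block, so the leading eigenvalue is $2$. Without exhibiting this (or an equivalent finite) matrix, the claim is not established.

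A secondary difference: you propose to restrict to a finite core $Y$ and build a finite incidence matrix, and you correctly flag that $f_1$ does not preserve $Y$, which makes the bookkeeping of arcs leaving $Y$ delicate. The paper sidesteps this entirely by working with the whole (infinite) train track; the resulting infinite matrix is harmless precisely because it decomposes as a $2 \times 2$ recurrent block plus a shift. Your remaining worries---that $\tau$ is genuinely invariant and that no branch is spurious---are reasonable in principle, but invariance is what \Cref{fig:Iteration} depicts ($\tau$ together with $f_1(\tau)$ carried by $\tau$), and the recurrent part of the dynamics, which alone determines the spectral radius and hence $\lambda_{HM}$, is supported on the two branches $x_1, x_2$ adjacent to the twisting curve. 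If you intend to pursue the finite-core version, you would need to verify that the finitely many isotopy classes of arcs you keep capture this recurrent part, at which point you would recover the same $2 \times 2$ block and the same answer.
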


\begin{proof}[Proof of Claim]
    The action of $f_1$ on $\tau$ can be described by the following linear operator using the choice of basis indicated by the labeling on branches shown in \Cref{fig:Iteration}. 

\[
    \begin{pNiceArray}{cc | cccc}
      0 & 1 & 0 & 0 & 0 & \cdots \\
      0 & 2 & 0 & 0 & 0 & \cdots \\
      \hline
      1 & 0 & 0 & 0 & 0 & \cdots \\
      0 & 1 & 0 & 0 & 0 & \cdots \\
      0 & 0 & 1 & 0 & 0 & \cdots \\
      \vdots & \vdots & \vdots & \vdots & \vdots & \ddots \\
    \end{pNiceArray}
\] 

\medskip

Note that the top two rows are entirely $0$ after the first two entries while the remaining rows are entirely $0$ except for a $1$ in the $(i-2)$-th column of the $i$-th row. This operator has leading eigenvalue $2$. Hence, $\lambda_{HM}(f_1) = 2$, as desired.\end{proof}

\begin{figure}
    \def\svgwidth{6in}
    %% Creator: Inkscape 1.0.2 (e86c8708, 2021-01-15), www.inkscape.org
%% PDF/EPS/PS + LaTeX output extension by Johan Engelen, 2010
%% Accompanies image file '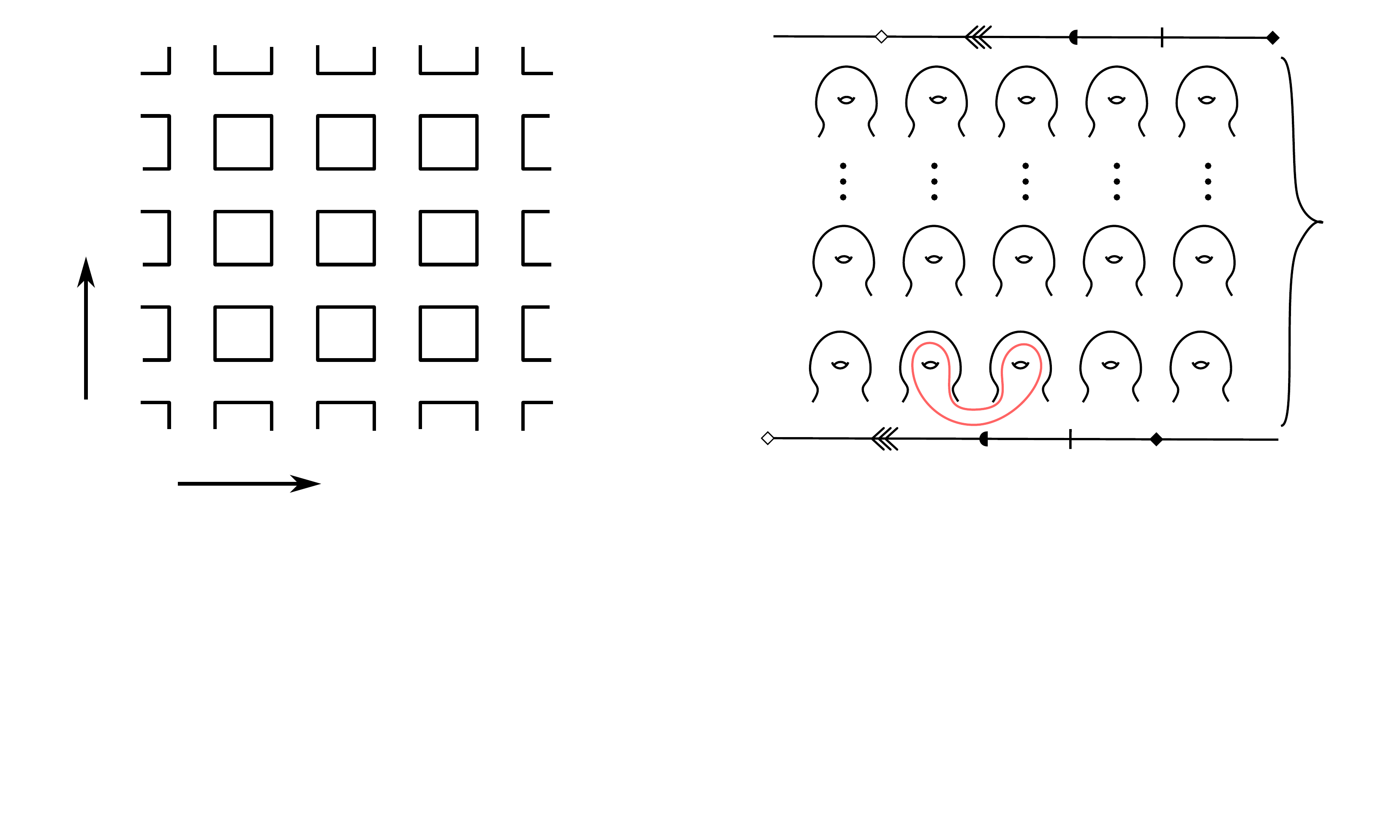' (pdf, eps, ps)
%%
%% To include the image in your LaTeX document, write
%%   \input{<filename>.pdf_tex}
%%  instead of
%%   \includegraphics{<filename>.pdf}
%% To scale the image, write
%%   \def\svgwidth{<desired width>}
%%   \input{<filename>.pdf_tex}
%%  instead of
%%   \includegraphics[width=<desired width>]{<filename>.pdf}
%%
%% Images with a different path to the parent latex file can
%% be accessed with the `import' package (which may need to be
%% installed) using
%%   \usepackage{import}
%% in the preamble, and then including the image with
%%   \import{<path to file>}{<filename>.pdf_tex}
%% Alternatively, one can specify
%%   \graphicspath{{<path to file>/}}
%% 
%% For more information, please see info/svg-inkscape on CTAN:
%%   http://tug.ctan.org/tex-archive/info/svg-inkscape
%%
\begingroup%
  \makeatletter%
  \providecommand\color[2][]{%
    \errmessage{(Inkscape) Color is used for the text in Inkscape, but the package 'color.sty' is not loaded}%
    \renewcommand\color[2][]{}%
  }%
  \providecommand\transparent[1]{%
    \errmessage{(Inkscape) Transparency is used (non-zero) for the text in Inkscape, but the package 'transparent.sty' is not loaded}%
    \renewcommand\transparent[1]{}%
  }%
  \providecommand\rotatebox[2]{#2}%
  \newcommand*\fsize{\dimexpr\f@size pt\relax}%
  \newcommand*\lineheight[1]{\fontsize{\fsize}{#1\fsize}\selectfont}%
  \ifx\svgwidth\undefined%
    \setlength{\unitlength}{1590.71874584bp}%
    \ifx\svgscale\undefined%
      \relax%
    \else%
      \setlength{\unitlength}{\unitlength * \real{\svgscale}}%
    \fi%
  \else%
    \setlength{\unitlength}{\svgwidth}%
  \fi%
  \global\let\svgwidth\undefined%
  \global\let\svgscale\undefined%
  \makeatother%
  \begin{picture}(1,0.60016319)%
    \lineheight{1}%
    \setlength\tabcolsep{0pt}%
    \put(0,0){\includegraphics[width=\unitlength,page=1]{Covers.pdf}}%
    \put(0.04120705,0.35178411){\makebox(0,0)[rt]{\lineheight{1.25}\smash{\begin{tabular}[t]{r}$d$\end{tabular}}}}%
    \put(0.17407004,0.20381665){\makebox(0,0)[t]{\lineheight{1.25}\smash{\begin{tabular}[t]{c}$1$\end{tabular}}}}%
    \put(0,0){\includegraphics[width=\unitlength,page=2]{Covers.pdf}}%
    \put(0.95867139,0.43862532){\makebox(0,0)[lt]{\lineheight{1.25}\smash{\begin{tabular}[t]{l}$d$\end{tabular}}}}%
    \put(0.09518164,0.57762805){\makebox(0,0)[rt]{\lineheight{1.25}\smash{\begin{tabular}[t]{r}$S$\end{tabular}}}}%
    \put(0.54943096,0.58274772){\makebox(0,0)[rt]{\lineheight{1.25}\smash{\begin{tabular}[t]{r}$S_d$\end{tabular}}}}%
    \put(0.26844881,0.10745359){\makebox(0,0)[rt]{\lineheight{1.25}\smash{\begin{tabular}[t]{r}$S_1$\end{tabular}}}}%
    \put(0,0){\includegraphics[width=\unitlength,page=3]{Covers.pdf}}%
  \end{picture}%
\endgroup%

    \caption{The Loch Ness monster surface, $S$, is an infinite cyclic cover of $S_d$ which is in turn a $d$-fold cover of $S_1$. Note $S_d$ and $S_1$ are, in fact, homeomorphic. The horizontal and vertical shifts shown on $S$ are used to define the deck transformation $p_d$.}
    \label{fig:WaffleSurface}
\end{figure}

\begin{figure}
    \def\svgwidth{3.5in}
    %% Creator: Inkscape 1.0.2 (e86c8708, 2021-01-15), www.inkscape.org
%% PDF/EPS/PS + LaTeX output extension by Johan Engelen, 2010
%% Accompanies image file '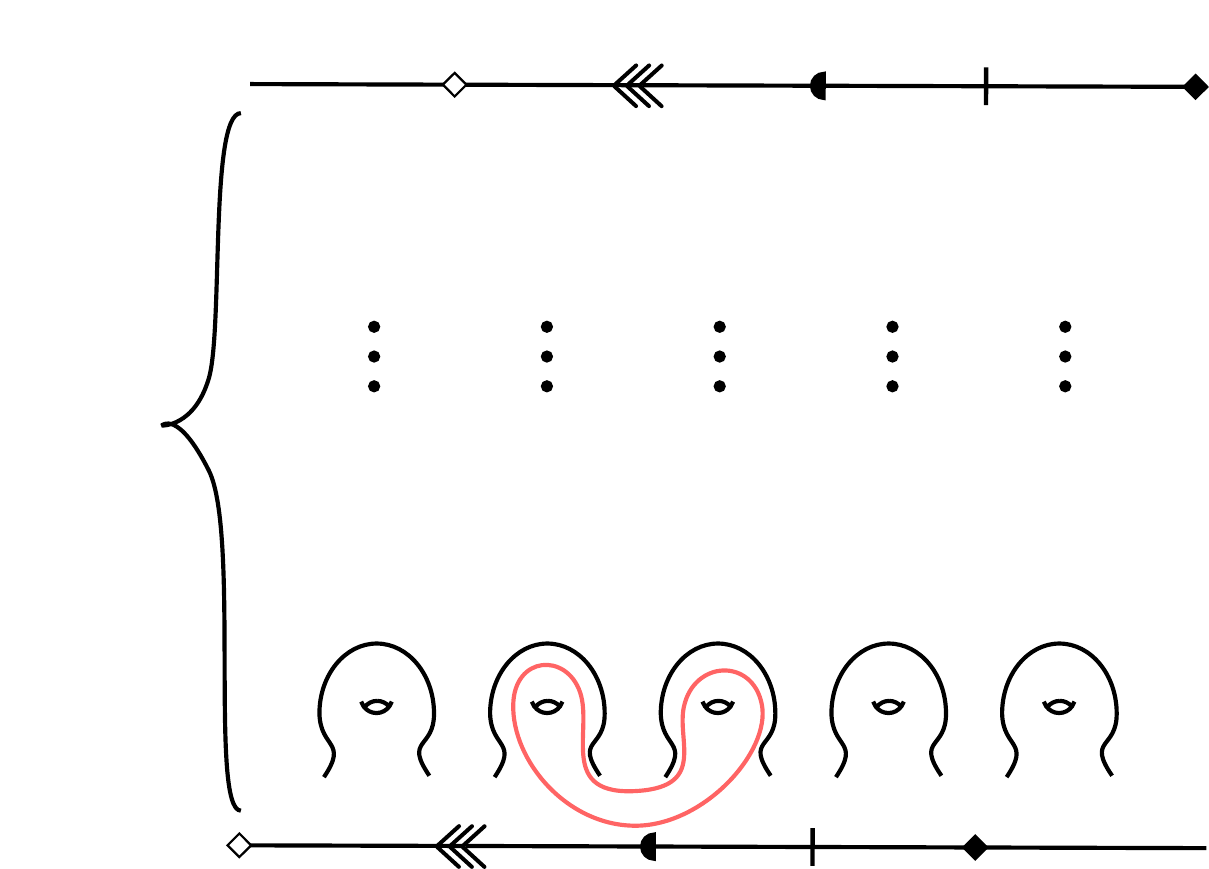' (pdf, eps, ps)
%%
%% To include the image in your LaTeX document, write
%%   \input{<filename>.pdf_tex}
%%  instead of
%%   \includegraphics{<filename>.pdf}
%% To scale the image, write
%%   \def\svgwidth{<desired width>}
%%   \input{<filename>.pdf_tex}
%%  instead of
%%   \includegraphics[width=<desired width>]{<filename>.pdf}
%%
%% Images with a different path to the parent latex file can
%% be accessed with the `import' package (which may need to be
%% installed) using
%%   \usepackage{import}
%% in the preamble, and then including the image with
%%   \import{<path to file>}{<filename>.pdf_tex}
%% Alternatively, one can specify
%%   \graphicspath{{<path to file>/}}
%% 
%% For more information, please see info/svg-inkscape on CTAN:
%%   http://tug.ctan.org/tex-archive/info/svg-inkscape
%%
\begingroup%
  \makeatletter%
  \providecommand\color[2][]{%
    \errmessage{(Inkscape) Color is used for the text in Inkscape, but the package 'color.sty' is not loaded}%
    \renewcommand\color[2][]{}%
  }%
  \providecommand\transparent[1]{%
    \errmessage{(Inkscape) Transparency is used (non-zero) for the text in Inkscape, but the package 'transparent.sty' is not loaded}%
    \renewcommand\transparent[1]{}%
  }%
  \providecommand\rotatebox[2]{#2}%
  \newcommand*\fsize{\dimexpr\f@size pt\relax}%
  \newcommand*\lineheight[1]{\fontsize{\fsize}{#1\fsize}\selectfont}%
  \ifx\svgwidth\undefined%
    \setlength{\unitlength}{580.26496396bp}%
    \ifx\svgscale\undefined%
      \relax%
    \else%
      \setlength{\unitlength}{\unitlength * \real{\svgscale}}%
    \fi%
  \else%
    \setlength{\unitlength}{\svgwidth}%
  \fi%
  \global\let\svgwidth\undefined%
  \global\let\svgscale\undefined%
  \makeatother%
  \begin{picture}(1,0.71877684)%
    \lineheight{1}%
    \setlength\tabcolsep{0pt}%
    \put(0,0){\includegraphics[width=\unitlength,page=1]{EndperiodicExample.pdf}}%
    \put(0.11729286,0.36756605){\makebox(0,0)[rt]{\lineheight{1.25}\smash{\begin{tabular}[t]{r}$d$\end{tabular}}}}%
    \put(0.19487663,0.67103464){\makebox(0,0)[rt]{\lineheight{1.25}\smash{\begin{tabular}[t]{r}$S_d$\end{tabular}}}}%
    \put(0,0){\includegraphics[width=\unitlength,page=2]{EndperiodicExample.pdf}}%
  \end{picture}%
\endgroup%

     \caption{The map $f_d^d: S_d \to S_d$, is the composition of the twists about the pink curves composed with a shift by step $1$ along each row in the horizontal direction.}
    \label{fig:EndPeriodicExample}
\end{figure}

\begin{figure}
    \def\svgwidth{4.5in}
    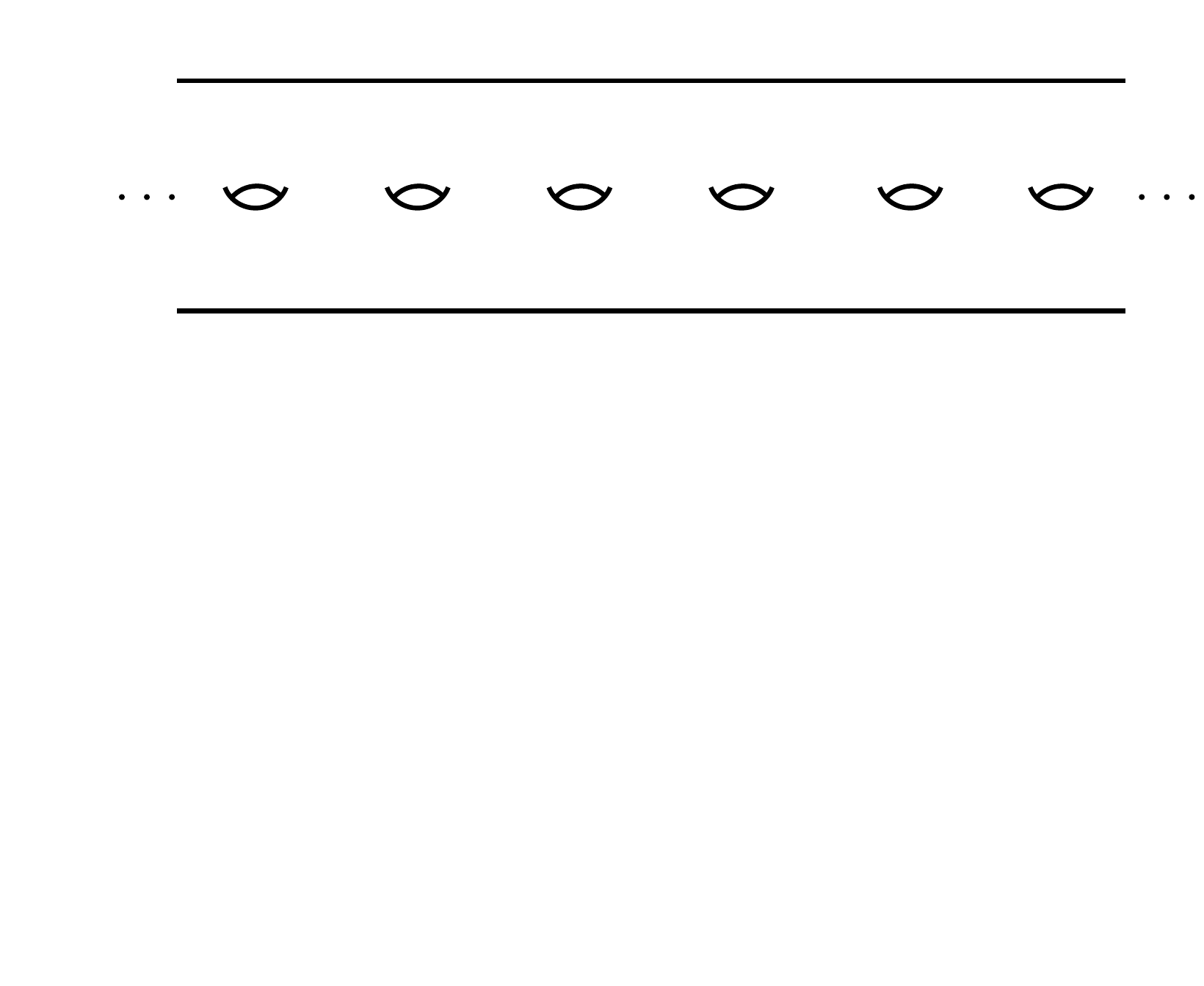
     \caption{Consider the map $f_1 = D \circ h$, where $D$ is the Dehn twist about the curve labeled $D$ and $h$ is the shift map which translates by $1$ to the right. The train track $\tau$ is invariant under $f_1$.} 
    \label{fig:Iteration}
\end{figure}

\section*{Acknowledgments} We are grateful to Sam Taylor and the anonymous referee for their careful reading and comments on a draft of this paper. Loving was supported in part by NSF Grant DMS-2231286.

\newpage

\bibliographystyle{alpha}
  \bibliography{main}

\end{document}